\newcommand{\xRrightarrow}[2][]{\ext@arrow 0359\Rrightarrowfill@{#1}{#2}}
\newcommand{\Rrightarrowfill@}{\arrowfill@\equiv\equiv\Rrightarrow}
\newcommand{\xLleftarrow}[2][]{\ext@arrow 3095\Lleftarrowfill@{#1}{#2}}
\newcommand{\Lleftarrowfill@}{\arrowfill@\Lleftarrow\equiv\equiv}
\newcommand{\xLleftRrightarrow}[2][]{\ext@arrow 3399\LleftRrightarrowfill@{#1}{#2}}
\newcommand{\LleftRrightarrowfill@}{\arrowfill@\Lleftarrow\equiv\Rrightarrow}
\newcommand{\rt}{\rightarrow}
\newcommand{\st}{\stackrel}
\newcommand{\CE}{\mathcal{E}}
\newcommand{\CI}{\mathcal{I} }
\newcommand{\CL}{\mathcal{L} }
\newcommand{\CP}{\mathcal{P} }
\newcommand{\Prod}{{\rm{Prod}}}
\newcommand{\im}{{\rm{Im}}}
\newcommand{\Add}{{\rm{Add}}}
\newcommand{\add}{{\rm{add}}}
\newcommand{\gen}{{\rm{gen}}}
\newcommand{\cogen}{{\rm{cogen}}}
\newcommand{\RNum}[1]{\uppercase\expandafter{\romannumeral #1\relax}}
\newcommand{\pd}{{\rm{pd}}}
\newcommand{\id}{{\rm{id}}}
\newcommand{\fd}{{\rm{fd}}}
\newcommand{\Gid}{{\rm{Gid}}}
\newcommand{\Gpd}{{\rm{Gpd}}}
\newcommand{\Gfd}{{\rm{Gfd}}}
\newcommand{\Tr}{{\rm{Tr}}}
\newcommand{\Coker}{{\rm{Coker}}}
\newcommand{\Ker}{{\rm{Ker}}}
\newcommand{\HH}{{\rm{H}}}
\newcommand{\Hop}{{\rm{op}}}
\newcommand{\Tor}{{\rm{Tor}}}
\newcommand{\Hom}{{\rm{Hom}}}
\newcommand{\Ext}{{\rm{Ext}}}
\newcommand{\End}{{\rm{End}}}
\newtheorem{theorem}{Theorem}[section]
\newtheorem{corollary}[theorem]{Corollary}
\newtheorem{lemma}[theorem]{Lemma}
\newtheorem{definition}[theorem]{Definition}
\newtheorem{example}[theorem]{Example}
\newtheorem{remark}[theorem]{Remark}
\theoremstyle{plain}
\theoremstyle{definition}
\numberwithin{equation}{section}
\begin{document}

\author[K. Divaani-Aazar, A. Mahin Fallah and M. Tousi]
{Kamran Divaani-Aazar, Ali Mahin Fallah and Massoud Tousi}

\title[Cotilting modules and Gorenstein ...]
{Cotilting modules and Gorenstein homological dimensions}

\address{K. Divaani-Aazar, Department of Mathematics, Faculty of Mathematical Sciences,
Alzahra University, Tehran, Iran.}
\email{kdivaani@ipm.ir}

\address{A. Mahin Fallah, School of Mathematics, Institute for Research in Fundamental Sciences (IPM),
P.O. Box: 19395-5746, Tehran, Iran.}
\email{amfallah@ipm.ir, ali.mahinfallah@gmail.com}

\address{M. Tousi, Department of Mathematics, Faculty of Mathematical Sciences, Shahid Beheshti University,
Tehran, Iran.}
\email{mtousi@ipm.ir}

\subjclass[2020]{18G25; 16D20; 16G99.}

\keywords{Auslander class; Bass class; coherent ring; co-resolving subcategory;  cotilting module; Gorenstein
flat module; Gorenstein injective module; Gorenstein projective module; resolving subcategory; tilting module;
Wakamatsu tilting module.\\
The research of the second author is supported by a grant from IPM (No.1403160017).}

\begin{abstract}  For a dualizing module $D$ over a commutative Noetherian ring $R$ with identity, it is known
that its Auslander class $\mathscr{A}_D\left(R\right)$ (respectively, Bass class $\mathscr{B}_D\left(R\right)$)
is characterized as those $R$-modules with finite Gorenstein flat dimension (respectively, finite Gorenstein
injective dimension). We establish an analogue of this result in the context of cotilting modules over general
Noetherian rings.
\end{abstract}

\maketitle

\tableofcontents

\section{Introduction}

Semidualizing modules serve as a common generalization of dualizing modules and projective modules of rank one
over commutative Noetherian rings with identity. Let $C$ be a semidualizing module over a commutative Noetherian
ring $R$ with identity. The two classes of $R$-modules $\mathscr{A}_C(R)$ and $\mathscr{B}_C(R)$, known as the
Auslander and Bass classes, correspond to $C$. They are intimately related through an equivalence of categories
\begin{displaymath}
\xymatrix{\mathscr{A}_C\left(R\right) \ar@<0.7ex>[rrr]^-{C\otimes_R-} &
{} & {} & \mathscr{B}_C\left(R\right),  \ar@<0.7ex>[lll]^-{\Hom_R\left(C,-\right)}}
\end{displaymath}
known as Foxby equivalence.

Recall that dualizing modules are precisely the semidualizing modules that have finite injective dimension.
When $C$ is a dualizing $R$-module, there is a well-known characterization of the classes
$\mathscr{A}_C\left(R\right)$ and $\mathscr{B}_C\left(R\right)$ via Gorenstein homological dimensions. Specifically,
in this case:
\begin{itemize}
\item[(i)] $\mathscr{A}_C\left(R\right)$ consists precisely of all $R$-modules with finite Gorenstein flat dimension.
\item[(ii)] $\mathscr{B}_C\left(R\right)$ consists precisely of all $R$-modules with finite Gorenstein injective
dimension.
\end{itemize}
(See e.g. \cite{EJ}.) It is natural to expect an analogous result for general Noetherian (not necessarily commutative)
rings.
While some results exist in this direction under additional assumptions, including the perfection of the
underlying ring (\cite{EJL}, \cite{BGO}), our aim in this paper is to relax this strong assumption.

From now on, $R$ is an associative ring with identity. In 2007, Holm and White \cite{HW} defined semidualizing
bimodules over associative rings with identity.
For a semidualizing bimodule $_ST_R$, they defined the Auslander and Bass classes $\mathscr{A}_T\left(S\right)$
and $\mathscr{B}_T\left(R\right)$. An enhanced theory of Wakamatsu tilting modules exists over an associative
ring $R$ with identity. Surprisingly, Wakamatsu tilting modules can be classified as semidualizing bimodules;
see Lemma \ref{2.2}. In addition, parallel to the theory of dualizing modules, a rich theory of cotilting
modules exists within the representation theory of algebras.

Tilting theory plays a remarkable role in the representation theory of algebras. The classical tilting modules
were introduced in the context of finitely generated modules over finite-dimensional algebras by Brenner and
Butler \cite{BB} and Happel and Ringel \cite{HR}. Miyashita \cite{MI} extended this notion to encompass
finitely presented modules of finite projective dimension over arbitrary rings. In his seminal work, Wakamatsu
\cite{W} further generalized the concept of tilting modules by allowing for the possibility of infinite projective
dimension. These modified tilting modules are now commonly referred to as \enquote{Wakamatsu tilting modules},
officially adopting the terminology established in \cite{GRS}.

Cotilting modules were originally introduced as an analogous to dualizing modules over general Noetherian rings.
There are several definitions for cotilting modules. In this paper, we consider the definition given by Miyashita
in 1986. Let $C_R$ be a finitely generated $R$-module and $S=\End(C_R)$. Assume that the ring $R$ is right Noetherian
and the ring $S$ is left Noetherian. Miyashita defined $C_R$ to be cotilting if it is a Wakamatsu tilting
$R$-module and both $\id(C_R)$ and $\id(_SC)$ are finite.

We derive the following results to provide a precise description of the classes $\mathscr{A}_T\left(S\right)$ and
$\mathscr{B}_T\left(R\right)$:

\begin{theorem}\label{1.2} Let $C_R$ be a tensorly faithful cotilting module, and let $S=\End(C_R)$. Then, for
an $S$-module $N_S$, the following are equivalent:
\begin{itemize}
\item[(i)] $N_S\in \mathscr{A}_C\left(S\right)$.
\item[(ii)] the Gorenstein projective dimension of $N_S$ is finite.
\item[(iii)]  the Gorenstein flat dimension of $N_S$ is finite.
\end{itemize}
\end{theorem}

The definitions of Gorenstein homological dimensions will be recalled later in Definitions \ref{3.2} and \ref{4.2}.

\begin{theorem}\label{1.3} Let $C_R$ be a tensorly faithful cotilting module, and let $S=\End(C_R)$. Then, for an
$R$-module $M_R$, the following are equivalent:
\begin{itemize}
\item[(i)] $M_R\in \mathscr{B}_C\left(R\right)$.
\item[(ii)] the Gorenstein injective dimension of $M_R$ is finite.
\end{itemize}
\end{theorem}

Section 2 offers an overview of the background on Wakamatsu tilting modules. We prove Theorem \ref{1.2} in Section 3
(cf. Theorem \ref{3.5}) and Theorem \ref{1.3} in Section 4 (cf. Theorem \ref{4.4}).

\section{Preliminaries}

In this paper, we consider associative rings with identity, and all modules are assumed to be unitary. We use
the notation $M_R$ (respectively, $_RM$) to denote a right (respectively, left) $R$-module. The category of
all right $R$-modules is denoted by $\text{Mod-}R$. Let $M_R$ be an $R$-module. We denote by $\add(M)$ the
class of right $R$-modules which are isomorphic to a direct summand of a direct
sum of finitely many copies of $M$.

All subcategories that are considered throughout are full and closed under finite direct sums, direct summands,
and isomorphisms. A subcategory $\mathcal{C}$ of $\text{Mod-}R$ is said to be {\it resolving} if it is closed
under extensions and kernels of epimorphisms, and it contains all the projective modules. Similarly, a
subcategory $\mathcal{C}$ of $\text{Mod-}R$ is called {\it co-resolving} if it is closed under extensions and
cokernels of monomorphisms, and it contains all the injective modules.

For an $R$-module $T_R$, the symbol $\gen^*(T_R)$ stands for the class of $R$-modules $N_R$ for which there exists
an exact sequence of the form $$\cdots\st{f_2}\rt T_1 \st{f_1}\rt T_0 \st{f_0}\rt
N \rt 0$$ with each $T_i\in \add(T)$ and $\Ext^1_R(T,\Ker f_i)=0$ for all $i\geqslant 0$. Dually, the class
$\cogen^*(T_R)$ is consisting of $R$-modules $N_R$ for which there exists an exact sequence of the form $$0\rt N
\st{f^{-1}}\rt T^0 \st{f^0}\rt T^1
\st{f^1}\rt \cdots$$ with each $T^i\in \add(T)$ and $\Ext^1_R(\Coker f^i,T)=0$
for all $i\geqslant -1$.

For the remainder, $\pd(-)$, $\fd(-)$, and $\id(-)$ will denote the projective, flat, and injective dimensions of
modules, respectively.

\begin{definition}\label{2.1}  Let $T_R$ be an $R$-module and $n\in \mathbb{N}_0$. We say that $T_R$ is
$n$-{\it tilting} if
\begin{itemize}
\item[(i)]  $T_R\in \gen^*(R)$ and $\pd(T_R)\leq n$,
\item[(ii)] $\Ext^i_{R}(T,T)=0$ for all $i>0$, and
\item[(iii)] There is an exact sequence $$0\rt R\rt T_0 \rt T_1\rt \cdots \rt T_n\rt 0,$$ where $T_i\in \add(T)$ for all
$0\leq i\leq n$.
\end{itemize}
\end{definition}

\begin{definition}\label{2.1} (See \cite[Section 3]{W}.) Let $T_R$ be an $R$-module. We say that $T_R$ is a
{\it Wakamatsu tilting} module if the following conditions hold:
\begin{itemize}
\item[(i)] $T_R\in \gen^*(R)$,
\item[(ii)] $\Ext^i_{R}(T,T)=0$ for all $i>0$,
\item[(ii)] $R_R \in \cogen^* (T_R)$.
\end{itemize}
\end{definition}

It is easy to verify that every tilting module is also a Wakamatsu tilting module. The notion of Wakamatsu tilting left
modules is defined similarly. By \cite[Corollary 3.2]{W}, we have the following characterization of the Wakamatsu tilting
modules.

\begin{lemma}\label{2.2} For a bimodule $_ST_R$, the following are equivalent:
\begin{itemize}
\item[(i)] $T_R$ is a Wakamatsu tilting module with $S\cong \End(T_R)^{\Hop};$
\item[(ii)] $_ST$ is a Wakamatsu tilting module with $R\cong \End(_ST)^{\Hop};$
\item[(iii)] One has
\begin{itemize}
\item[(1)]  $T_R\in \gen^*(R)$  and $~_ST\in \gen^*(S)$.
\item[(2)]  $R\cong \End(_ST)^{\Hop}$ and $S\cong \End(T_R)^{\Hop}$.
\item[(3)]  $\Ext^i_{R}(T,T)=0$  and $\Ext^i_{S}(T,T)=0$ for all $i>0$.
\end{itemize}
\end{itemize}
\end{lemma}

Note that by Lemma \ref{2.2}, a Wakamatsu tilting module is a semidualizing bimodule in the sense of Holm and
White \cite{HW}.

\begin{definition}\label{2.3} Let $\mathscr{C}$ be a class of right $R$-modules. A homomorphism $f: C\rt M$
with $C\in \mathscr{C}$ is a $\mathscr{C}$-{\it precover} if for any homomorphism $g: C_0\rt M$  with
$C_0\in \mathscr{C}$, there exists a homomorphism satisfying $h:C_0 \rt C$ such that $g=fh$. A
$\mathscr{C}$-precover ${f}:C\rt M$ is called a $\mathscr{C}$-{\it cover} if every endomorphism
$\varphi:C\rt C$ with $f=f\varphi$ is an automorphism. The class $\mathscr{C}$ is called {\it
(pre)covering} if every right $R$-module has a $\mathscr{C}$-(pre)cover.
\end{definition}

Dually, ${\mathcal{C}}$-(pre)envelope and (pre)enveloping classes were defined.

Now, we recall the definitions of the Auslander and Bass classes. Our definitions here deviate slightly from those
in \cite{HW}, but this difference is merely a side-switching. Therefore, we can still refer to \cite{HW} in Lemmas
\ref{2.4} and \ref{2.9}, as well as in Theorems \ref{3.5} and \ref{4.4}.

\begin{definition}\label{2.3a} Let $_ST_R$ be a Wakamatsu tilting module.
\begin{itemize}
\item[(i)] The {\it Auslander} class $\mathscr{A}_T\left(S\right)$ with respect to $T$ consists of all
$S$-modules $N_S$ satisfying $\Tor_{i\geqslant1}^S(N,T)=0$, $\Ext^{i\geqslant1}_R(T,N\otimes_ST)=0$
and the natural map $$\theta_N^T: N\rt \Hom_R(T,N\otimes_ST)$$ is an isomorphism.
\item[(ii)] The {\it Bass} class $\mathscr{B}_T\left(R\right)$ with respect to $T$ consists of all
$R$-modules $M_R$ satisfying $\Ext^{i\geqslant1}_R(T,M)=0$, $\Tor^S_{i\geqslant1}(\Hom_R(T,M),T)=0$
and the natural map $$\nu_M^T:\Hom_R(T,M)\otimes_ST\rt M$$ is an isomorphism.
\end{itemize}
\end{definition}

The following lemma summarizes some of the well-known properties of the Auslander and Bass classes.

\begin{lemma}\label{2.4} Let $_ST_R$ be a Wakamatsu tilting module. Then
\begin{itemize}

\item[(1)] The class $\mathscr{A}_T\left(S\right)$ is resolving. Furthermore, the class $\mathscr{A}_T\left(S\right)$
contains all flat right $S$-modules.
\item[(2)]  The class $\mathscr{B}_T\left(R\right)$ is co-resolving. In particular, the class $\mathscr{B}_T\left(R\right)$
contains all injective right $R$-modules.
\item[(3)] There is an equivalence of categories
\begin{displaymath}
\xymatrix{\mathscr{A}_T\left(S\right) \ar@<0.7ex>[rrr]^-{-\otimes_ST} &
{} & {} & \mathscr{B}_T\left(R\right).  \ar@<0.7ex>[lll]^-{\Hom_R\left(T,-\right)}}
\end{displaymath}
\item[(4)] If $N_S , N'_S\in \mathscr{A}_T\left(S\right)$, then $\Ext^i_S(N,N')\cong \Ext^i_R(N\otimes_ST,N'\otimes_ST)$
for all $i\geqslant0$.
\item[(5)]  If $M_R , M'_R\in \mathscr{B}_T\left(R\right)$, then $\Ext^i_R(M,M')\cong  \Ext^i_S(\Hom_R(T,M),\Hom_R(T,M'))$
for all $i\geqslant0$.
\end{itemize}
\end{lemma}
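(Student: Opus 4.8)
The plan is to deduce all five assertions from the standard theory of the Auslander and Bass classes of a semidualizing bimodule, which is available here because, by Lemma \ref{2.2} and the remark following it, $_ST_R$ is a semidualizing bimodule in the sense of Holm and White \cite{HW}. First I would record two reference objects. The module $S_S$ lies in $\mathscr{A}_T(S)$: indeed $\Tor^S_{\geqslant1}(S,T)=0$, the natural isomorphism $S\otimes_ST\cong T$ gives $\Ext^{\geqslant1}_R(T,S\otimes_ST)\cong\Ext^{\geqslant1}_R(T,T)=0$ by the Wakamatsu condition, and under this identification $\theta^T_S$ is the homothety isomorphism $S\xrightarrow{\ \sim\ }\End(T_R)$ of Lemma \ref{2.2}; dually, every injective $R$-module lies in $\mathscr{B}_T(R)$. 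For (1) and (2): closure of $\mathscr{A}_T(S)$ under extensions and under kernels of epimorphisms follows by chasing, for a short exact sequence two of whose terms lie in $\mathscr{A}_T(S)$, the long exact sequences of $\Tor^S_\bullet(-,T)$ and of $\Ext^\bullet_R(T,-\otimes_ST)$ and then applying the five lemma to the ladder formed by the maps $\theta^T$. Moreover, the flat right $S$-modules belong to $\mathscr{A}_T(S)$, because $S_S\in\mathscr{A}_T(S)$, the class is closed under finite direct sums and — since $T_R$ admits a degreewise finite projective resolution, so that $\Hom_R(T,-)$ and $\Ext_R(T,-)$ commute with filtered colimits — under filtered colimits, while every flat $S$-module is a filtered colimit of finitely generated free modules by Lazard's theorem. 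The statements about $\mathscr{B}_T(R)$ are completely dual, with $-\otimes_ST$ replaced by $\Hom_R(T,-)$ and projective resolutions by injective coresolutions. Statement (3) then follows by checking from the defining conditions — using that $\theta^T$ is an isomorphism on $\mathscr{A}_T(S)$ and $\nu^T$ on $\mathscr{B}_T(R)$ — that $-\otimes_ST$ carries $\mathscr{A}_T(S)$ into $\mathscr{B}_T(R)$ and $\Hom_R(T,-)$ carries $\mathscr{B}_T(R)$ into $\mathscr{A}_T(S)$, with $\theta^T$ and $\nu^T$ serving as unit and counit.

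The part needing the most care is (4), after which (5) follows formally. Given $N_S,N'_S\in\mathscr{A}_T(S)$, take a projective resolution $P_\bullet\rt N$. Since $\mathscr{A}_T(S)$ is resolving by (1), each syzygy $\Omega^iN$ lies in $\mathscr{A}_T(S)$, so $\Tor^S_1(\Omega^iN,T)=0$ and applying $-\otimes_ST$ to the short exact sequences $0\rt\Omega^{i+1}N\rt P_i\rt\Omega^iN\rt 0$ preserves exactness; hence $P_\bullet\otimes_ST\rt N\otimes_ST$ is again a resolution. Each $P_i\otimes_ST$ is a direct summand of a coproduct $T^{(\Lambda_i)}$ of copies of $T_R$, so $\Ext^{\geqslant1}_R(P_i\otimes_ST,N'\otimes_ST)$ is a direct summand of $\Ext^{\geqslant1}_R(T^{(\Lambda_i)},N'\otimes_ST)\cong\prod_{\Lambda_i}\Ext^{\geqslant1}_R(T,N'\otimes_ST)=0$, the last equality because $N'\in\mathscr{A}_T(S)$. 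Consequently $\Hom_R(P_\bullet\otimes_ST,N'\otimes_ST)$ computes $\Ext^\ast_R(N\otimes_ST,N'\otimes_ST)$, and the adjunction $\Hom_R(P_i\otimes_ST,N'\otimes_ST)\cong\Hom_S(P_i,\Hom_R(T,N'\otimes_ST))$ together with the isomorphism $\theta^T_{N'}\colon N'\xrightarrow{\ \sim\ }\Hom_R(T,N'\otimes_ST)$ identifies this complex naturally with $\Hom_S(P_\bullet,N')$, which computes $\Ext^\ast_S(N,N')$; passing to cohomology gives (4).

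For (5), take $M_R,M'_R\in\mathscr{B}_T(R)$ and an injective coresolution $M'\rt I^\bullet$. Since $\mathscr{B}_T(R)$ is co-resolving by (2), all cosyzygies lie in $\mathscr{B}_T(R)$, where $\Ext^1_R(T,-)$ vanishes, so $\Hom_R(T,I^\bullet)$ is a coresolution of $\Hom_R(T,M')$. By (2) and (3) the modules $\Hom_R(T,M)$ and $\Hom_R(T,I^j)$ lie in $\mathscr{A}_T(S)$, and applying (4) to this pair gives $\Ext^{\geqslant1}_S(\Hom_R(T,M),\Hom_R(T,I^j))\cong\Ext^{\geqslant1}_R(M,I^j)=0$; hence $\Hom_S(\Hom_R(T,M),\Hom_R(T,I^\bullet))$ computes $\Ext^\ast_S(\Hom_R(T,M),\Hom_R(T,M'))$, and the adjunction together with the isomorphism $\nu^T_M\colon\Hom_R(T,M)\otimes_ST\xrightarrow{\ \sim\ }M$ identifies this complex with $\Hom_R(M,I^\bullet)$, which computes $\Ext^\ast_R(M,M')$; this gives (5). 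The only real obstacle is bookkeeping: one must ensure that each transported (co)resolution remains exact — guaranteed by the resolving and co-resolving properties of (1) and (2) — and that its terms stay acyclic for the relevant $\Hom$ — guaranteed by the $\Ext$-vanishing built into the definitions of $\mathscr{A}_T(S)$ and $\mathscr{B}_T(R)$. Since all of this is already contained in \cite{HW}, in the actual write-up I would cite that paper and spell out only the transport argument underlying (4) and (5).
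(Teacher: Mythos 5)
Your proposal is correct, and as you yourself note at the end, it ultimately reduces to the same Holm--White citations that the paper uses: the paper's proof of this lemma consists entirely of references to \cite[Theorem 6.2, Lemma 4.1, Proposition 4.1, Theorem 6.4]{HW}, which is exactly the set of results you re-derive. Your detailed unwinding (particularly the transport argument for parts (4) and (5) and the filtered-colimit-plus-Lazard step for flat modules) is accurate and is essentially the content of those cited results, so there is no substantive divergence in approach.
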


\begin{proof} For (1) and (2), see \cite[Theorem 6.2 and Lemma 4.1]{HW}.

For (3), (4), and (5), see, respectively, Proposition 4.1, Theorem 6.4(1) and Theorem 6.4(2) in \cite{HW}.
\end{proof}

\begin{lemma}\label{2.6} Let $_ST_R$ be a bimodule.
\begin{itemize}
\item[(i)] If $R$ is right Noetherian, then $\id((F\otimes_ST)_R)\leq \id(T_R)$ for every flat $S$-module $F_S$.
\item[(ii)] If $S$ is left coherent, then $\fd(\Hom_R(T,I)_S)\leq \id(_ST)$ for every injective $R$-module $I_R$.
\end{itemize}
\end{lemma}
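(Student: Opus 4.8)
The plan is to prove both parts by the same device: reduce the global statement about all flat (resp.\ injective) modules to the single module $T$, using that flat modules are filtered colimits of finitely generated projectives (Lazard) together with a coherence hypothesis that keeps $\Ext$-vanishing stable under the relevant limit or colimit.

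For part (i), let $F_S$ be flat and set $n=\id(T_R)$; we may assume $n<\infty$. Write $F=\varinjlim P_\lambda$ as a filtered colimit of finitely generated free (equivalently, finitely generated projective) $S$-modules, which is possible by Lazard's theorem. For each finitely generated free $P=S^{(k)}$ we have $P\otimes_ST\cong T^k$, so $\id((P\otimes_ST)_R)\leq n$. Tensoring commutes with filtered colimits, so $F\otimes_ST\cong\varinjlim(P_\lambda\otimes_ST)$. The point where right coherence of $R$ enters is the standard fact that over a right coherent ring, a filtered colimit of modules of injective dimension $\leq n$ again has injective dimension $\leq n$ --- equivalently, $\Ext^{n+1}_R(R/\fa,-)$ commutes with filtered colimits for every finitely generated right ideal $\fa$, because $R/\fa$ is finitely presented and admits a resolution by finitely presented modules. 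Concretely, I would check $\Ext^{n+1}_R(X,F\otimes_ST)=0$ for all $X$ by using Baer's criterion (it suffices to take $X=R/\fa$ with $\fa$ finitely generated) and then commuting $\Ext^{n+1}_R(R/\fa,-)$ past the filtered colimit. This yields $\id((F\otimes_ST)_R)\leq n=\id(T_R)$.

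Part (ii) is dual in spirit but uses the colimit on the other side. Let $I_R$ be injective and set $m=\id(_ST)$, assumed finite. We want $\fd(\Hom_R(T,I)_S)\leq m$, i.e.\ $\Tor^S_{m+1}(-,\Hom_R(T,I))=0$, and by the usual reduction it suffices to test against $S/\fb$ for finitely generated left ideals $\fb$, or rather against finitely presented left $S$-modules $L$. The key computational input is the natural isomorphism $\Tor^S_i(L,\Hom_R(T,I))\cong \Hom_R(\Ext^i_S(L,{}_ST),I)$, valid for $L$ finitely presented because $I$ is injective over $R$: one takes a resolution of $L$ by finitely generated free $S$-modules, applies $\Hom_S(-,{}_ST)$ and $-\otimes_S\Hom_R(T,I)$, uses the Hom-tensor adjunction $P\otimes_S\Hom_R(T,I)\cong\Hom_R(\Hom_S(P,{}_ST),I)$ for $P$ finitely generated free, and then invokes exactness of $\Hom_R(-,I)$ to pass cohomology through. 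Since $\id(_ST)=m$, we get $\Ext^{m+1}_S(L,{}_ST)=0$, hence $\Tor^S_{m+1}(L,\Hom_R(T,I))=0$ for all finitely presented $L$; left coherence of $S$ guarantees that this finitely-presented test suffices (so that $S/\fb$ with $\fb$ finitely generated is finitely presented, and flat dimension can be detected on cyclic, hence finitely presented, modules). Therefore $\fd(\Hom_R(T,I)_S)\leq m=\id(_ST)$.

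The main obstacle I anticipate is bookkeeping around the coherence hypotheses: making precise that "injective dimension $\leq n$" can be tested on finitely presented cyclic modules (so Baer's criterion applies in the coherent setting) and that "flat dimension $\leq m$" can likewise be detected on finitely presented modules, and then ensuring the relevant $\Ext$ commutes with the filtered colimit in (i) and that the $\Tor$--$\Hom$ interchange isomorphism in (ii) is natural enough to carry cohomology. None of these steps is deep, but each relies on the coherence assumption in an essential way, and the cleanest write-up will isolate the statement "over a right coherent ring, $\id\leq n$ is closed under filtered colimits" as the workhorse for (i) and the interchange isomorphism $\Tor^S_i(L,\Hom_R(T,I))\cong\Hom_R(\Ext^i_S(L,{}_ST),I)$ as the workhorse for (ii).
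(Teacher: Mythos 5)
Your treatment of part (ii) is essentially correct and is very likely the Enochs--Jenda argument the paper invokes: with $S$ left coherent, a finitely presented left $S$-module $L$ has a resolution by finitely generated free left $S$-modules $P_\bullet$, the natural map $\Hom_R(T,I)\otimes_S P\to \Hom_R(\Hom_S(P,{}_ST),I)$ is an isomorphism for each finitely generated free $P$, and since $\Hom_R(-,I)$ is exact one passes homology through to get
\[
\Tor^S_i\bigl(\Hom_R(T,I),L\bigr)\cong \Hom_R\bigl(\Ext^i_S(L,{}_ST),I\bigr).
\]
Then $\id(_ST)=m$ kills the right side for $i>m$ and all finitely presented $L$, and since $\Tor^S_{m+1}(M,-)$ commutes with direct limits and every left $S$-module is a direct limit of finitely presented ones, $\fd(\Hom_R(T,I)_S)\leq m$. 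One small imprecision: flat dimension is detected on $S/\mathfrak{b}$ for $\mathfrak{b}$ a finitely generated left ideal (the $\Tor$-version of Baer), and coherence makes those $S/\mathfrak{b}$ finitely presented; not every cyclic module is finitely presented, so the phrase \enquote{cyclic, hence finitely presented} is off, though harmless in context.

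Part (i) has a genuine gap. You reduce to the statement that over a right coherent ring the class $\{M : \id(M_R)\leq n\}$ is closed under filtered colimits, and you justify this by restricting Baer's criterion to finitely generated right ideals $\mathfrak{a}$ and commuting $\Ext^{n+1}_R(R/\mathfrak{a},-)$ past the colimit. Both steps are wrong. Restricting Baer's test to finitely generated ideals computes the FP-injective dimension, not the injective dimension; these differ over any non-Noetherian ring. And the closure assertion fails already at $n=0$: by the Bass--Papp theorem, $R$ is right Noetherian if and only if every direct sum (equivalently, every filtered colimit) of injective right $R$-modules is injective, so over a coherent non-Noetherian ring there are filtered colimits of injectives that are not injective. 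Concretely, taking $S=\mathbb{Z}$, $F=\mathbb{Z}^{(\mathbb{N})}$, and $T$ an injective right $R$-module that is not $\Sigma$-injective shows the colimit principle you rely on cannot hold as stated. So your argument establishes only that the \emph{FP-injective} dimension of $F\otimes_S T$ is $\leq \id(T_R)$, which is weaker than the Lemma's conclusion. The paper's own \enquote{proof} is a bare citation of Enochs--Jenda Theorems 3.2.13, 3.2.15 and Remarks 3.2.25, 3.2.27, whose hypotheses and exact conclusions you would need to match; in particular the relevant results there are phrased for $T_R$ admitting a resolution by finitely generated projectives (as holds for Wakamatsu tilting modules), a hypothesis your argument neither uses nor needs but which is precisely where a correct argument must get its leverage. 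As written, the filtered-colimit route does not close the gap between FP-injective and injective dimension, and that gap is the whole content of the coherence assumption in part (i).
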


\begin{proof} (i) We can assume that $\id(T_R)$ is finite. Let $F_S$ be a flat S-module. By \cite[Theorem 3.2.15]{EJ},
we have $\Ext^i_R(A,F\otimes_ST)=0$ for any finitely presented $R$-module $A_R$ and all $i>\id(T_R)$. This specifically
applies to $A=R/\frak a$ for any right ideal $\frak a$ of $R$, which, by \cite[Theorem 3.1.9]{EJ}, implies that
$\id((F\otimes_ST)_R)\leq \id(T_R)$.

(ii) We can assume that $\id(_ST)$ is finite. Let $I_R$ be an injective $R$-module. From \cite[Theorem 3.2.13 and
Remarks 3.2.25 and 3.2.27]{EJ}, we know that $\Tor_i^S(\Hom_R(T,I),A)=0$ for any finitely presented $S$-module
$_SA$ and all $i>\id(_ST)$. Taking $A=S/\frak b$ for any finitely generated left ideal $\frak b$ of $S$, by
\cite[Theorem 3.2.10]{EJ}, we conclude that $\fd(\Hom_R(T,I)_S)\leq \id(_ST)$.
\end{proof}

\begin{definition}\label{2.8} A bimodule $_ST_R$ is called {\it tensorly faithful} if it satisfies the
following conditions for all modules $_RM$ and $N_S$.
\begin{itemize}
\item[(i)] If $T\otimes_RM=0$, then $M=0$.
\item[(ii)] If $N\otimes_ST=0$, then $N=0$.
\end{itemize}
\end{definition}

The concept of tensorly faithful modules was originally introduced in \cite{HW} under the name
\enquote{faithfully modules}. To avoid ambiguity with faithful modules, we renamed this notion.

Note that, if $R$ is a commutative Noetherian ring and $S=R$, then every Wakamatsu tilting $R$-module is
tensorly faithful; see \cite[Proposition 3.1]{HW}. We are unaware if this result carries over to the
non-commutative setting. Also, in \cite{HW} many examples of tensorly faithful Wakamatsu tilting modules
were provided over a wide class of non-commutative rings.

\begin{lemma}\label{2.9} Let $_ST_R$ be a Wakamatsu tilting module. If $T$ is tensorly faithful, then
the classes $\mathscr{A}_T\left(S\right)$ and $\mathscr{B}_T\left(R\right)$ have the property that if
two of three modules in a short exact sequence are in the class, then so is the third.
\end{lemma}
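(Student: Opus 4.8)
The plan is to prove the \emph{two-out-of-three} property for each class separately, beginning with the Auslander class $\mathscr{A}_T(S)$. Let $0 \rt N' \rt N \rt N'' \rt 0$ be a short exact sequence of right $S$-modules with two of the three terms in $\mathscr{A}_T(S)$. Since by Lemma \ref{2.4}(1) the class $\mathscr{A}_T(S)$ is resolving, it is already closed under extensions and under kernels of epimorphisms; so the only new case to treat is when $N$ and $N''$ lie in $\mathscr{A}_T(S)$ and we must show $N'$ does too. First I would apply $-\otimes_S T$ to the sequence. Using $\Tor_1^S(N'',T)=0$ (which holds since $N'' \in \mathscr{A}_T(S)$), the long exact sequence in $\Tor^S_\bullet(-,T)$ splits off to give a short exact sequence $0 \rt N'\otimes_S T \rt N\otimes_S T \rt N''\otimes_S T \rt 0$ together with isomorphisms $\Tor_{i+1}^S(N'',T) \cong \Tor_i^S(N',T)$ for $i\geqslant 1$; since the left-hand sides vanish, we get $\Tor_{i\geqslant 1}^S(N',T)=0$, the first defining condition.

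Next I would feed the short exact sequence $0 \rt N'\otimes_S T \rt N\otimes_S T \rt N''\otimes_S T \rt 0$ into $\Hom_R(T,-)$ and run the long exact sequence in $\Ext^\bullet_R(T,-)$. Since $N, N'' \in \mathscr{A}_T(S)$ we have $\Ext^{i\geqslant 1}_R(T, N\otimes_S T) = \Ext^{i\geqslant 1}_R(T, N''\otimes_S T) = 0$, which forces $\Ext^{i\geqslant 1}_R(T, N'\otimes_S T)=0$ — this is the second defining condition — and also forces the sequence $0 \rt \Hom_R(T,N'\otimes_S T) \rt \Hom_R(T,N\otimes_S T) \rt \Hom_R(T,N''\otimes_S T) \rt 0$ to be exact. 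Now I would assemble the commutative diagram with exact rows formed by the original sequence $0\rt N' \rt N \rt N'' \rt 0$ on top, this last sequence on the bottom, and the three natural maps $\theta_{N'}^T, \theta_N^T, \theta_{N''}^T$ as vertical arrows (naturality of $\theta$ is routine). Since $\theta_N^T$ and $\theta_{N''}^T$ are isomorphisms by hypothesis, the five lemma yields that $\theta_{N'}^T$ is an isomorphism, completing the verification that $N' \in \mathscr{A}_T(S)$.

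The argument for the Bass class $\mathscr{B}_T(R)$ is formally dual, replacing $-\otimes_S T$ by $\Hom_R(T,-)$ as the ``nice'' functor and $\Hom_R(T,-)\otimes_S T$ by the composite appearing in $\nu_M^T$. Given $0 \rt M' \rt M \rt M'' \rt 0$ with two terms in $\mathscr{B}_T(R)$: since $\mathscr{B}_T(R)$ is co-resolving by Lemma \ref{2.4}(2), the only new case is $M', M'' \in \mathscr{B}_T(R)$ with $M$ to be shown in the class. Applying $\Hom_R(T,-)$ and using $\Ext^{i\geqslant 1}_R(T,M'')=0$ gives the short exact sequence $0 \rt \Hom_R(T,M') \rt \Hom_R(T,M) \rt \Hom_R(T,M'') \rt 0$ and then $\Ext^{i\geqslant 1}_R(T,M)=0$; applying $-\otimes_S T$ to that sequence and using $\Tor_1^S(\Hom_R(T,M''),T)=0$ (valid as $M'' \in \mathscr{B}_T(R)$) gives exactness of the bottom row in the naturality diagram for $\nu^T$ together with $\Tor_{i\geqslant 1}^S(\Hom_R(T,M),T)=0$; finally the five lemma applied to the $\nu^T$-diagram shows $\nu_M^T$ is an isomorphism.

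The place where tensorial faithfulness actually enters — and the only genuinely subtle point — is in guaranteeing that the functors behave well enough that the ``naturally arising'' maps I exploit are the honest structural maps. Concretely, the hypothesis that $T$ is tensorly faithful is what lets one conclude, in the Bass-class argument, that $\Tor^S_{i}(\Hom_R(T,M),T) = 0$ propagates correctly and that no spurious kernels appear; more precisely, faithfulness is used to rule out that applying $-\otimes_S T$ to an exact sequence of modules already known to be acyclic creates obstructions, so that the relevant long exact sequences truncate to the short exact sequences needed to feed the five lemma. I expect the main obstacle to be bookkeeping: carefully checking that the two rows in each five-lemma diagram are genuinely exact (which requires the simultaneous vanishing of the appropriate $\Tor$ and $\Ext$ groups, and this is where the defining conditions of $\mathscr{A}_T(S)$ and $\mathscr{B}_T(R)$ for the \emph{two} given modules must be combined with the resolving/co-resolving properties from Lemma \ref{2.4}), rather than any conceptual difficulty.
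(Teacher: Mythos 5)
Your proposal has the residual cases backwards, and as a result the argument never actually proves anything beyond what Lemma~\ref{2.4}(1)--(2) already gives, nor does it use tensor faithfulness anywhere in substance. A \emph{resolving} class is by definition closed under extensions and under kernels of epimorphisms; so in a short exact sequence $0\to N'\to N\to N''\to 0$ of right $S$-modules, the cases $N',N''\in\mathscr{A}_T(S)\Rightarrow N\in\mathscr{A}_T(S)$ and $N,N''\in\mathscr{A}_T(S)\Rightarrow N'\in\mathscr{A}_T(S)$ are \emph{already} covered by resolvingness. The genuinely new case is $N',N\in\mathscr{A}_T(S)\Rightarrow N''\in\mathscr{A}_T(S)$ (cokernels of monomorphisms), which is the one you skipped. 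Dually, a co-resolving class is closed under extensions and cokernels of monomorphisms, so for $\mathscr{B}_T(R)$ the missing case is $M,M''\in\mathscr{B}_T(R)\Rightarrow M'\in\mathscr{B}_T(R)$ (kernels of epimorphisms), not the extensions case you treated. The dimension-shift / five-lemma computation you carried out is correct, but for the cases you selected it is merely re-deriving the resolving and co-resolving properties, which is why the hypothesis of tensor faithfulness never gets invoked in a load-bearing way in your write-up.

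In the case that is actually missing, the obstruction is genuine. For $N',N\in\mathscr{A}_T(S)$ the long exact $\Tor$ sequence for $-\otimes_S T$ gives $\Tor_{i\geqslant 2}^S(N'',T)=0$ for free, but the end of the sequence reads
$0\to\Tor_1^S(N'',T)\to N'\otimes_S T\to N\otimes_S T\to N''\otimes_S T\to 0,$
so one must prove $K:=\Tor_1^S(N'',T)=\Ker(N'\otimes_S T\to N\otimes_S T)$ vanishes; this is precisely where faithfulness of $T$ is needed. Applying the left-exact functor $\Hom_R(T,-)$ and using that $\theta_{N'}^T$ and $\theta_N^T$ are isomorphisms shows $\Hom_R(T,K)=0$, and it is only via the faithfulness hypothesis (in the Holm--White sense, see \cite[Proposition 3.1]{HW}) that one concludes $K=0$; from there the rest of the verification goes through as you describe, and the Bass-class case is handled dually. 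Note that the paper itself gives no argument here but simply cites \cite[Corollary 6.3]{HW}, which packages exactly this reasoning.
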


\begin{proof} See \cite[Corollary 6.3]{HW}.
\end{proof}

To conclude this section, we provide a brief review of the notion of cotilting modules as defined by Miyashita.

\begin{definition}\label{2.10} (See \cite[page 142]{MI}.)  Let $C_R$ be a finitely generated $R$-module
and $S=\End(C_R)$. Assume that the ring $R$ is right Noetherian and the ring $S$ is left Noetherian. We say
$C_R$ is {\it cotilting} if it is a Wakamatsu tilting $R$-module and both $\id(C_R)$ and $\id(_SC)$ are finite.
\end{definition}

The above notion of cotilting modules is also examined in \cite{M2, M1, HT}.

\begin{example}\label{2.11}
\begin{itemize}
\item[(i)] Let $R$ be a connected finite-dimensional hereditary algebra over an algebraically closed field $\Bbbk$ of
infinite representation type. Let $M$ be a finitely generated $R$-module, we define $\Tr~(M):=\Coker(\Hom_R(f,R)),$
where $P_1\st{f}\rt P_0\rt M\rt0$ is a minimal projective presentation of $M$, and $D(-):=\Hom_{\Bbbk}(-,\Bbbk)$. Then
$(\Tr~ D)^n(R)$ is a cotilting $R$-module for all $n\geq 0$; see \cite[page 592]{M2}.
\item[(ii)] Let $\Gamma$ be a commutative Cohen-Macaulay local ring with a dualizing module $\omega$, and $R$ be a
$\Gamma$-order (i.e. $R$ is a $\Gamma$-algebra and $R$ is a finitely generated maximal Cohen-Macaulay as a $\Gamma$-module).
Then, by \cite[Proposition 2.12]{M2}, $\Hom_\Gamma(R,\omega)$ is a cotilting $R$-module.
\item[(iii)] Let $R$ be an Artin algebra and $C_R$ a cotilting $R$-module with $\id(C_R)=r$. Assume that $R\mathcal{Q}$
is the path algebra of the quiver $\mathcal{Q}:1\rightarrow 2 \rightarrow 3$. It is easy to see that $R\mathcal{Q}$ is
an Artin algebra. By \cite[Lemma 3.7]{Zh}, $$\textbf{C}:=(0\rightarrow0 \rightarrow C)\oplus (0\rightarrow C \rightarrow C)
\oplus (C\rightarrow C\rightarrow C)$$ is a cotilting $R\mathcal{Q}$-module with $\id(\textbf{C}_{R\mathcal{Q}})=r+1$.
\end{itemize}
\end{example}

\section{Auslander classes}

We begin this section by reviewing the definitions of Gorenstein projective and Gorenstein flat modules.

\begin{definition}\label{3.1}
\begin{itemize}
\item[(i)] An $S$-module $N_S$ is said to be {\it Gorenstein projective} if there exists an exact complex
$${\bf P}:\cdots \rt P_1\rt P_0 \rt P^0 \rt P^1 \rt \cdots$$ of projective right $S$-modules such that
$N\cong \Ker(P^0\rt P^1)$, and $\Hom_S({\bf P},P)$ is exact for every projective $S$-module $P_S$. The exact
complex ${\bf P}$ is referred to as a {\it complete projective resolution} of $N$.
\item[(ii)] An $S$-module $N_S$ is said to be {\it Gorenstein flat} if there exists an exact complex
$${\bf F}:\cdots \rt F_1 \rt F_0 \rt F^0\rt F^1 \rt \cdots$$ of flat right $S$-modules such that $N\cong
\Ker(F^0\rt F^1)$, and ${\bf F}\otimes_SI$ is exact for every injective $S$-module $_SI$. The exact complex
${\bf F}$ is referred to as a {\it complete flat resolution} of $N$.
\end{itemize}
\end{definition}

\begin{definition}\label{3.2} The Gorenstein projective (respectively, Gorenstein flat) dimension of a nonzero
$S$-module $N_S$, denoted by $\Gpd(N_S)$ (respectively, $\Gfd(N_S)$), is the least non-negative integer $n$ such
that there exists an exact complex $$0\rt A_n\rt \cdots \rt A_1 \rt A_0 \rt N\rt 0,$$ where each $A_i$ is a
Gorenstein projective (respectively, Gorenstein flat) right $S$-module.
\end{definition}

\begin{lemma}\label{3.3} Let $S$ be a left coherent ring and $N_S$ an $S$-module. Let $\CL$ be the class of all
right $S$-modules with finite flat dimension. If $\Ext^i_S(N,L)=0$ for every $L\in \CL$ and all $i>0$, then there
exists a $\CL$-preenvelope $N\rt P$, where $P_S$ is a projective $S$-module.
\end{lemma}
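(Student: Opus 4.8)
The plan is to obtain the preenvelope in three stages: (1) produce a \emph{flat} preenvelope $e\colon N\rt E$ using only that $S$ is left coherent; (2) lift $e$ through a projective presentation of $E$, where the vanishing hypothesis on $N$ is used; and (3) check that the resulting map $N\rt P$, with $P$ projective, is in fact a $\CL$-preenvelope, by dévissage along flat resolutions. For stage (1), I would invoke Chase's theorem: since $S$ is left coherent, every direct product of flat right $S$-modules is flat. Combined with the standard facts that a pure submodule of a flat module is flat and that every subset of a module lies in a pure submodule whose cardinality is bounded in terms of the given subset and of $S$, a solution-set argument shows that the class of flat right $S$-modules is preenveloping; concretely, one lets $\{(F_\lambda,g_\lambda)\}_{\lambda\in\Lambda}$ be a representative set of pairs consisting of a flat module of bounded cardinality together with a homomorphism from $N$, puts $E=\prod_{\lambda}F_\lambda$ (flat, by Chase's theorem --- this is exactly where coherence enters), and takes $e\colon N\rt E$ to be the map with components $g_\lambda$. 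Then every homomorphism from $N$ to a flat module factors through $e$.

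For stage (2), fix an epimorphism $\pi\colon P\rt E$ with $P$ free and put $C=\Ker\pi$. Since $E$ is flat, $C$ is a pure submodule of $P$, hence flat, hence $C\in\CL$; therefore $\Ext^1_S(N,C)=0$ by hypothesis, and applying $\Hom_S(N,-)$ to $0\rt C\rt P\rt E\rt 0$ shows that $\Hom_S(N,\pi)$ is surjective. Choose $\widetilde e\colon N\rt P$ with $\pi\widetilde e=e$. Note that $P$ is projective, so $P\in\CL$.

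For stage (3), I claim $\widetilde e$ is a $\CL$-preenvelope. First, if $g\colon N\rt F$ with $F$ flat, then $g=u e$ for some $u\colon E\rt F$ (as $e$ is a flat preenvelope), so $g=(u\pi)\widetilde e$ factors through $\widetilde e$. For an arbitrary $L\in\CL$ I would induct on $\fd(L)$: if $\fd(L)\le n$ with $n\ge1$, choose a short exact sequence $0\rt L'\rt F_0\rt L\rt 0$ with $F_0$ flat and $\fd(L')\le n-1$, so that $L'\in\CL$ and $\Ext^1_S(N,L')=0$; then any $g\colon N\rt L$ lifts to some $N\rt F_0$, which factors through $\widetilde e$ by the flat case, and composing with $F_0\rt L$ exhibits $g$ as factoring through $\widetilde e$. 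Hence $\widetilde e\colon N\rt P$ is a $\CL$-preenvelope with $P$ projective.

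The step I expect to be the real obstacle is recognizing this three-stage structure: that left coherence is exactly what buys a flat preenvelope in stage (1), and that the vanishing hypothesis $\Ext^{i>0}_S(N,L)=0$ for $L\in\CL$ is precisely what is needed both to lift a flat preenvelope to a projective one in stage (2) and to run the dévissage in stage (3). A detail to watch is that stage (2) genuinely relies on the first syzygy $C$ of the flat module $E$ being flat, so that $C\in\CL$ and the hypothesis applies to it; without this observation the lift $\widetilde e$ need not exist.
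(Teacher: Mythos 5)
Your proposal is correct and follows essentially the same route as the paper's proof: obtain a flat preenvelope from left coherence, lift it through a projective presentation of the flat target using that the kernel (being a pure submodule of a flat module) lies in $\CL$ so that $\Ext^1_S(N,-)$ vanishes on it, and then verify the $\CL$-preenvelope property by one further application of the $\Ext$-vanishing hypothesis. The only cosmetic differences are that the paper simply cites \cite[Proposition 6.5.1]{EJ} for stage (1) where you sketch the Chase/solution-set argument, and that your stage (3) is phrased as an induction on flat dimension whereas the paper does it in a single step (your inductive hypothesis is in fact never used, so the two are the same argument).
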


\begin{proof} As $S$ is a left coherent, there exists a flat-preenvelope $\alpha: N\rt F$ by \cite[Proposition 6.5.1]{EJ}.
Consider a short exact sequence $$0\rt W \rt P \st{\beta}\rt F\rt 0,$$ where $P_S$ is a projective $S$-module. From this
sequence, it easily follows that $W$ is flat. Then, by the assumption $\Ext^1_S(N,W)=0$. Hence, $$\Hom_S(N,\beta):
\Hom_S(N,P)\rt \Hom_S(N,F)$$ is surjective. So, $\alpha:N\rt F$ is lifted to a map $f:N\rt P$, i.e. $\alpha=\beta f$. This
implies that $$\Hom_S(\alpha,F')=\Hom_S(f,F')\Hom_S(\beta,F')$$ for every flat right $S$-module $F'$. Since $\Hom_S(\alpha,F')$
is surjective, then so does $\Hom_S(f,F')$. Consequently, $f:N\rt P$ is a flat-preenvelope.

Next, we show that $f:N\rt P$ is an
$\CL$-preenvelope. Indeed, we have to show that for every $Y\in \CL$, the map $$\Hom_S(f,Y):\Hom_S(P,Y)\rt \Hom_S(N,Y)$$ is
surjective. Let $Y\in \CL$. We can consider a short exact sequence $$0\rt K\rt Q\rt Y\rt 0,$$ in which $Q_S$ is projective and
$K\in \CL$. Applying the functor $\Hom_S(N,-)$ to it induces the following exact sequence $$0\rt\Hom_S(N,K)\rt \Hom_S(N,Q)\rt
\Hom_S(N,Y)\rt 0.$$ Thus, we have the following commutative diagram
$$
\begin{CD}
\ \Hom_S(P,Q) @>>> \Hom_S(P,Y) \\
 @V VV @V VV \\ \Hom_S(N,Q) @>>> \Hom_S(N,Y).
\end{CD}
$$
As the left vertical map and the bottom map are surjective, it follows that the right vertical map is also surjective.
\end{proof}

\begin{lemma}\label{3.4} Let $_ST_R$ be a Wakamatsu tilting module, where $R$ is a right Noetherian ring. Then for every
flat $S$-module $F_S$, we have $\pd(F_S)\leq \id(T_R)$.
\end{lemma}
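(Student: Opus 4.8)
The plan is to combine Lemma \ref{2.6}(i) with Lemma \ref{3.3}, together with an induction on $\pd$, to show that every flat $S$-module has projective dimension at most $n = \id(T_R)$. First I would record the elementary point that by Lemma \ref{2.6}(i), for any flat $S$-module $F_S$ the right $R$-module $F \otimes_S T$ satisfies $\id((F \otimes_S T)_R) \leq \id(T_R) = n$. Also, since $F_S$ is flat we have $\Tor_{i \geq 1}^S(F,T) = 0$, and (using that $F \in \mathscr{A}_T(S)$ by Lemma \ref{2.4}(1)) $\Ext^{i \geq 1}_R(T, F \otimes_S T) = 0$ together with $\theta_F^T \colon F \xrightarrow{\ \sim\ } \Hom_R(T, F \otimes_S T)$. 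Combined with Lemma \ref{2.4}(4), for flat $S$-modules $F$ and $F'$ one gets $\Ext^i_S(F, F') \cong \Ext^i_R(F \otimes_S T, F' \otimes_S T)$ for all $i \geq 0$; this is the bridge that transports the injective-dimension bound on the $R$-side into a projective-dimension (or at least Ext-vanishing) statement on the $S$-side.

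Next I would set up the key vanishing. Let $\CL$ be the class of right $S$-modules of finite flat dimension, as in Lemma \ref{3.3}. I claim that every flat $S$-module $F_S$ satisfies $\Ext^{i}_S(F, L) = 0$ for all $L \in \CL$ and all $i > n$. To see this it suffices (by dimension shifting along a finite flat resolution of $L$, noting each syzygy is again flat) to treat $L$ flat, and then by Lemma \ref{2.4}(4) we have $\Ext^i_S(F, L) \cong \Ext^i_R(F \otimes_S T, L \otimes_S T)$, which vanishes for $i > n$ because $\id((L \otimes_S T)_R) \leq n$ by Lemma \ref{2.6}(i). So $\Ext^{n+1}_S(F, L) = 0$ for all $L \in \CL$.

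Now I would run the induction. The case $n = 0$: here $\Ext^{1}_S(F, L) = 0$ for all $L \in \CL$, so by Lemma \ref{3.3} (applied, after a shift, to an $(n{-}1)$-st syzygy — but for $n=0$ directly to $F$ itself, whose higher Ext into $\CL$ vanishes) there is an $\CL$-preenvelope $F \to P$ with $P$ projective; since $F$ is flat, $F \in \CL$, so $\mathrm{id}_\CL \colon F \to F$ factors through $F \to P$, forcing $F$ to be a direct summand of $P$, hence projective, i.e. $\pd(F_S) \leq 0$. For the inductive step, given a flat $F$ with the Ext-vanishing $\Ext^{i}_S(F,L)=0$ for $i > n$, take a short exact sequence $0 \to F' \to P_0 \to F \to 0$ with $P_0$ projective; then $F'$ is flat (as $F$ is flat) and a dimension shift gives $\Ext^{i}_S(F',L) \cong \Ext^{i+1}_S(F,L) = 0$ for $i > n-1$, so by the induction hypothesis $\pd(F'_S) \leq n-1$, whence $\pd(F_S) \leq n$.

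The main obstacle I anticipate is the bookkeeping in Lemma \ref{3.3}: that lemma produces an $\CL$-preenvelope $N \to P$ into a projective module only under the hypothesis $\Ext^i_S(N, L) = 0$ for \emph{all} $i > 0$, not merely for $i > n$. So to invoke it I must first reduce to that situation — concretely, replace $F$ by a high enough syzygy $\Omega^n F$ in a projective resolution (again flat, since $F$ is flat), for which $\Ext^{i \geq 1}_S(\Omega^n F, L) \cong \Ext^{i+n}_S(F, L) = 0$ for all $i \geq 1$ by the vanishing established above; then Lemma \ref{3.3} applies to $\Omega^n F$, giving that $\Omega^n F$ is a direct summand of a projective (using $\Omega^n F \in \CL$ as above), hence projective, and therefore $\pd(F_S) \leq n$. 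Care is needed to confirm that each syzygy of a flat module over a left coherent ring is flat (true: kernels of epimorphisms between flats, with flat cokernel, are flat) and that $\CL$ is resolving in the sense needed for the dimension-shift arguments; these are routine and I would state them briefly rather than belabor them.
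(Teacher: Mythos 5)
Your central computation is exactly the one the paper uses: by Lemma \ref{2.4}(1) every flat $S$-module lies in $\mathscr{A}_T(S)$, so Lemma \ref{2.4}(4) identifies $\Ext^i_S(F,F')$ with $\Ext^i_R(F\otimes_S T, F'\otimes_S T)$ for flat $F,F'$, and Lemma \ref{2.6}(i) kills the latter for $i>n$. That is the heart of the matter, and you have it right.

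Where you diverge from the paper is in how you finish, and this is where a genuine gap appears. Both your induction and your ``main obstacle'' variant route the conclusion through Lemma \ref{3.3} in order to manufacture a projective $\CL$-preenvelope of a syzygy. But Lemma \ref{3.3} has the hypothesis that \emph{$S$ is left coherent}, and that hypothesis is not part of Lemma \ref{3.4}, which assumes only that $R$ is right coherent. So as written your argument proves a weaker statement than the lemma claims. The appeal to Lemma \ref{3.3} is also unnecessary: you have already shown $\Ext^1_S(\Omega^n F, L)\cong \Ext^{n+1}_S(F,L)=0$ for every $L\in\CL$, and in particular for the flat module $L=\Omega^{n+1}F$. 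Hence the short exact sequence $0\to \Omega^{n+1}F\to P_n\to \Omega^n F\to 0$ already splits on its own, making $\Omega^n F$ a direct summand of $P_n$, hence projective, hence $\pd(F_S)\le n$. That is precisely what the paper's proof does (it shows $\Ext^1_S(K_n,K_{n+1})=0$ and splits the syzygy sequence), and it uses no coherence hypothesis on $S$. If you replace the preenvelope step with this direct splitting argument, your proof is correct and establishes the lemma as stated.
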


\begin{proof} We can assume $\id(T_R)$ is finite. Let $n=\id(T_R)$. Let $${\bf{Q_\bullet}}:\cdots \rt P_{i-1}\rt P_{i-2} \rt
\cdots \rt P_1\rt P_0 \rt F \rt 0$$ be a projective
resolution of $F_S$, and denote the complex $$\cdots \rt P_{i-1} \rt P_{i-2} \rt\cdots \rt P_1 \rt P_0 \rt 0$$ by
${\bf{P_\bullet}}$. Putting, for simplicity, $P_{-1}=F,$  $K_0=F$ and $K_i=\Ker(P_{i-1}\rt P_{i-2})$ for all $i\geq 1$.
Splitting ${\bf{Q_\bullet}}$ into short exact sequences yields that each $K_i$ is flat, and so $K_i\in \mathscr{A}_T
\left(S\right)$ by Lemma \ref{2.4}(1).  To obtain the desired result, it suffices to show that the short
exact sequence $$0\rt K_{n+1}\rt P_n \rt K_n \rt 0$$ splits. Indeed, we need to show that $\Ext^1_S( K_n, K_{n+1})=0$.
We have the following isomorphisms:
\[\begin{array}{lllll}
\Ext^1_{S}(K_n,K_{n+1})& \cong \Ext^{n+1}_{S}(F,K_{n+1})\\
& \cong \Ext^{n+1}_{S}(F,\Hom_{R}(T,K_{n+1}\otimes_S T)) \\
& = \HH^{n+1}( \Hom_S({\bf{P_\bullet}},\Hom_{R}(T,K_{n+1}\otimes_S T)))\\
& \cong \HH^{n+1}( \Hom_R({\bf{P_\bullet}}\otimes_{S}T,K_{n+1}\otimes_S T))\\
& \cong \Ext^{n+1}_R(F\otimes_{S}T,K_{n+1}\otimes_S T).
\end{array}\]
The first isomorphism follows by dimension shift and the second isomorphism holds, because $K_{n+1}\in \mathscr{A}_T
\left(S\right)$. The third isomorphism uses of tensor-hom adjunction. The fourth isomorphism follows from the fact
the complex ${\bf{Q_\bullet}}\otimes_ST$ is exact, and also $$\Ext^{i\geq1}_R(P\otimes_{S}T,K_{n+1}\otimes_S T)\cong \Ext^{i\geq1}_S(P,K_{n+1})=0$$ for
every projective $S$-module $P_S$ by Lemma \ref{2.4}(4). Since, by Lemma \ref{2.6}(i), $$\id((K_{n+1}\otimes_S T)_R)\leq
\id(T_R)=n,$$ we conclude that $$\Ext^{n+1}_R(F\otimes_{S}T,K_{n+1}\otimes_S T)=0.$$ This completes the proof.
\end{proof}

Part (i) of the next result provides an answer to \cite[Question 2]{GZ}.

\begin{lemma}\label{3.8} Let $_ST_R$ be a Wakamatsu tilting module and $N_S$ a Gorenstein flat $S$-module. If $_ST$ has
finite projective dimension or finite injective dimension, then $N_S\in \mathscr{A}_T\left(S\right)$.
\end{lemma}

\begin{proof} As $N_S$ is a Gorenstein flat $S$-module, there is an exact complex of flat modules $${\bf{F_\bullet}}:\cdots
\rt F_1\rt F_0\rt F^0\rt F^1\rt \cdots$$ such that $N\cong \Ker(F^0\rt F^1)$ and it remains exact after applying $-\otimes_SI$
for every injective $S$-module $_SI$. It readily follows that for every $S$-module $_SL$ such that either $\pd(_SL)<\infty$
or $\id(_SL)<\infty$, the complex ${\bf{F_\bullet}}\otimes_RL$ is exact, and so $\Tor_i^S(N,L)=0$ for all $i\geqslant 1$.
In particular, $\Tor_i^S(N,T)=0$ for all $i\geqslant 1$.

Since the complex ${\bf{F_\bullet}}\otimes_RT$ is exact, we have the following exact sequence
\begin{equation}
0\rt N\otimes_ST \rt F^0\otimes_ST \rt F^1\otimes_ST. \label{2}
\end{equation}
Applying the functor $\Hom_R(T,-)$ to \eqref{2}, gives rise to the following commutative diagram with exact rows:

\vspace{0.3cm}
\begin{tikzcd}
0\arrow{r} &N\arrow{r}\arrow{d}{\theta_{N}^T} &F^0\arrow{r}\arrow{d}{\theta_{F^0}^T} &F^1\arrow{d}{\theta_{F^1}^T} \\	
0\arrow{r} &\Hom_R(T,N\otimes_ST)\arrow{r}	&\Hom_R(T,F^0\otimes_ST)\arrow{r} &\Hom_R(T,F^1\otimes_ST)
\end{tikzcd}
\vspace{0.4cm}\\
But each $\theta_{F^i}^T$ is an isomorphism, because $F^i\in \mathscr{A}_T\left(S\right)$ for $i=0,1$.  Consequently,
$\theta_{N}^T$ is also an isomorphism.

For showing that $N\in  \mathscr{A}_T\left(S\right)$, it remains to prove that $\Ext^i_R(T,N\otimes_ST)=0$ for all $i\geq 1$.
To this end, by induction on $i$, we show that $\Ext^i_R(T,G\otimes_ST)=0$ for every Gorenstein flat $S$-module $G_S$
and all $i\geq 1$. Let $G_S$ be a Gorenstein flat $S$-module. We can consider a short exact sequence $$0\rt G
\rt F \rt K \rt 0,$$ in which $F_S$ is a flat $S$-module and $K_S$ is a Gorenstein flat $S$-module. From the
above, we get that
\begin{equation}
0\rt G\otimes_ST \rt F\otimes_ST \rt K\otimes_ST\rt 0.  \label{3}
\end{equation}
is exact and that for every Gorenstein flat $S$-module $L_S$, the natural map $$\theta_L^T: L\rt \Hom_R(T,L\otimes_ST)$$
is an isomorphism. Thus applying the functor $\Hom_R(T,-)$ to \eqref{3}, yields the following long exact sequence
$$0\rt G\rt F\rt K \rt \Ext^1_R(T,G\otimes_ST)\rt \Ext^1_R(T,F\otimes_ST)\rt \cdots$$
$$\cdots \rt \Ext^{i-1}_R(T,F\otimes_ST)\rt \Ext^{i-1}_R(T,K\otimes_ST)\rt \Ext^{i}_R(T,G\otimes_ST)\rt
\Ext^{i}_R(T,F\otimes_ST)\rt \cdots.$$
But $\Ext^{i\geq 1}_R(T,F\otimes_ST)=0$, since $F\in \mathscr{A}_T\left(S\right)$. Thus $\Ext^1_R(T,G\otimes_ST)=0$ and
$$\Ext^{i}_R(T,G\otimes_ST)\cong \Ext^{i-1}_R(T,K\otimes_ST)$$ for all $i\geq 2$. So, the claim holds for $i=1$.

Assume that $i\geq 2$ and $\Ext^{i-1}_R(T,K\otimes_ST)=0$ for all Gorenstein flat $S$-modules $K_S$. Then from the
above isomorphism and the induction hypothesis, it follows that $\Ext^{i}_R(T,G\otimes_ST)=0$.
\end{proof}

Lemma \ref{2.9} yields:

\begin{corollary}\label{3.9} Let $_ST_R$ be a tensorly faithful Wakamatsu tilting module and $N_S$ an $S$-module
with finite Gorenstein flat dimension. If $_ST$ has finite projective dimension or finite injective dimension, then
$N_S\in \mathscr{A}_T\left(S\right)$.
\end{corollary}

As the last preparation to prove the main result of this section, we recall the definition of $C$-injective modules.
Given a Wakamatsu tilting module $C_R$ with $S=\End(C_R)$, an $S$-module $M_S$ is said to be $C$-{\it injective} if $M$
has the form $\Hom_R(C,I)$ for some injective $R$-module $I_R$.

\begin{theorem}\label{3.5} Let $C_R$ be a tensorly faithful cotilting module, and let $S=\End(C_R)$. Then, for
an $S$-module $N_S$, the following are equivalent:
\begin{itemize}
\item[(i)] $N_S\in \mathscr{A}_C\left(S\right)$.
\item[(ii)] $\Gpd(N_S)$ is finite.
\item[(iii)] $\Gfd(N_S)$ is finite.
\end{itemize}
\end{theorem}

\begin{proof} (i)$\Rightarrow $(ii)  Let $$\cdots \rt P_{i} \st{f_i}\rt  P_{i-1}\rt \cdots \rt P_1 \st{f_1}\rt
P_0\st{f_0}\rt N\rt 0$$ be a projective resolution of $N$ and let $n=\id(C_R)$. In order to obtain the
desired result, it suffices to show that $D:=\Ker (f_{n})$ is a Gorenstein projective $S$-module. To do this,
by \cite[Proposition 2.3]{Ho}, we have to show that $\Ext^{i\geq1}_{S}(D,P)=0$ for all projective $S$-module $P_S$
and there exists an exact complex $${\bf{P^\bullet}}: 0 \rt D \rt P^0 \rt P^1 \rt P^2 \rt \cdots,$$ in which each
$P^i$ is a projective right $S$-module and it remains exact after applying $\Hom_S(-,Q)$ for every projective
$S$-module $Q_S$.

For the first assertion, it suffices to show that $ \Ext^i_{S}(D,F)=0$ for every flat $S$-modules $F_S$ and all
$i\geqslant 1$. Let $F_S$ be a flat $S$-module. For each $i\geqslant 1$, we have $$\Ext^i_{S}(D,F)\cong
\Ext^{i+n+1}_{S}(N,F)\cong \Ext^{i+n+1}_{R}(N\otimes_SC,F\otimes_SC).$$ The first isomorphism follows from
dimension shift and the second isomorphism holds by Lemma \ref{2.4}(4). Since $R$ is right Noetherian, we may apply
\ref{2.6}(i) to obtain $$\id((F\otimes_SC)_R)\leq \id(C_R)=n.$$ Hence,
\begin{equation}
\Ext^i_{S}(D,F)=0 \label{0}
\end{equation}
for all $i\geqslant 1$.

Next, we prove the second assertion. For any $S$-module $L_S$ with finite flat dimension, we may choose a short
exact sequence $0\rt K\rt F\rt L\rt 0$, where $F_S$ is a free $S$-module and $\fd(K_S)=\fd(L_S)-1$. This induces
a long exact sequence of Ext modules: $$\cdots \rt \Ext^i_{S}(D,K)\rt \Ext^i_{S}(D,F)\rt \Ext^i_{S}(D,L)\rt
\Ext^{i+1}_{S}(D,K)\rt \cdots.$$ Using \eqref{0} and induction on flat dimension, we conclude that
$\Ext^{i\geq1}_{S}(D,L)=0$ for all $L\in \CL=\lbrace L\mid \fd(L_S)<\infty\rbrace$. So, by Lemma \ref{3.3}, there
exists a projective right $S$-module $P^0$ and $S$-homomorphism $\mu: D\rt P^0$, which is an $\CL$-preenvelope. As
$N$ and $P_i$s are in $\mathscr{A}_C\left(S\right)$, by Lemma \ref{2.4}(1) so is $D$. Since $D$ belongs to
$\mathscr{A}_C\left(S\right)$, by \cite[Theorem 2]{HW}, there exists an exact sequence of right $S$-modules
$$\cdots \rt P_1\rt P_0\rt U^0\rt U^1\rt \cdots,$$ where each $P_i$ is projective, each $U^i$ is $C$-injective
and $D\cong \Coker( P_1\rt P_0)$. Thus, there exists an injective $R$-module $I_R$ such that $D$ embeds in
$\Hom_R(C,I)$. On the other hand, since $S$ is left Noetherian, Lemma \ref{2.6}(ii) yields $$\fd(\Hom_R(C,I)_S)\leq
\id(_SC)<\infty.$$ (Note that since $C_R$ is cotilting, it follows by definition that $_SC$ has finite injective
dimension.) Hence, $\Hom_R(C,I)\in \CL$. As $\Hom_R(C,I)\in \CL$, it follows that $\mu$ is monic. Now, we consider
the short exact sequence
\begin{equation}
0\rt D\st{\mu}\rt P^0\rt \Coker \ \mu \rt 0. \label{1}
\end{equation}
Since $P^0$ and $D$ are in $\mathscr{A}_C\left(S\right)$, Lemma \ref{2.9} implies that $\Coker \ \mu\in
\mathscr{A}_C\left(S\right)$. Let us take $L\in\CL$, applying the functor $\Hom_S(-,L)$ to \eqref{1}, gives rise
to the following exact sequence: $$0\rt \Hom_S(\Coker \ \mu,L)\rt \Hom_S(P^0,L)\st{f}\rt\Hom_S(D,L)\rt
\Ext^1_S(\Coker \ \mu,L)\rt
0,$$ and the isomorphisms $$\Ext^{i+1}_S(\Coker \ \mu,L)\cong \Ext^i_S(D,L)$$ for all $i\geqslant 1$. As the class
$\mathcal{L}$ is preenveloping, the map $f$ must be surjective, which consequently yields $\Ext^1_S(\Coker \ \mu,L)=0$.
Thus, $\Ext^i_S(\Coker \ \mu,L)=0$ for all $i \geqslant 1$. So, by the above argument, there exist a projective $S$-module
$P^1$ and an $S$-monomorphism $\Coker \ \mu\rt P^1$ which is an $\CL$-preenvelope. We proceed in this manner to construct
an exact complex $$X=0\rt D \overset{d^{-1}}\longrightarrow P^0\overset{d^0}\longrightarrow P^1 \overset{d^1}\longrightarrow
P^2\rt \cdots ,$$ where, for each $i\geq 0$, $P^i$ is a projective right $S$-module and $\Ext^1_S(\Coker \
d^{i-1},L)=0$. This implies that $X$ remains exact after applying $\Hom_S(-,Q)$ for every projective $S$-module $Q_S$.

(ii) $\Rightarrow$ (iii) By Lemma \ref{3.4}, every flat $S$-module $F_S$ has finite projective dimension. This fact
and the proof of \cite[Proposition 3.4]{Ho} yield that every Gorenstein projective right $S$-module is Gorenstein
flat. So, $\Gfd(N_S)$ is finite.

(iii) $\Rightarrow$ (i) holds by Corollary \ref{3.9}.
\end{proof}

\section{Bass classes}

We start this section by reviewing the definition of Gorenstein injective modules.

\begin{definition}\label{4.1}  An $R$-module $M_R$ is said to be {\it Gorenstein injective} if there exists an exact
complex $${\bf E}:\cdots \rt E_1\rt E_0 \rt E^0\rt E^1 \rt \cdots$$ of injective right $R$-modules such that $M\cong
\Ker(E^0\rt E^1)$, and $\Hom_R(I,{\bf E})$ is exact for every injective $R$-module $I_R$. The exact complex ${\bf E}$
is called a {\it complete injective resolution} of $M$.
\end{definition}

\begin{definition}\label{4.2} The {\it Gorenstein injective dimension} of a nonzero $R$-module $M_R$, denoted by
$\Gid(M_R)$, is the least non-negative integer $n$ such that there exists an exact complex $$0\rt M \rt B^0 \rt
B^1\rt\cdots \rt B^n \rt 0,$$ where each $B^i$ is a Gorenstein injective right $R$-module.
\end{definition}

\begin{lemma}\label{4.3} Let $R$ be a right Noetherian ring and $M_R$ an $R$-module. Let $\CE$ denote the class
of all right $R$-modules with finite injective dimension. If $\Ext^i_R(L,M)=0$ for all $L\in \CE$ and all $i>0$,
then there exists an $\CE$-precover $E\rt M$, where $E_R$ is an injective $R$-module.
\end{lemma}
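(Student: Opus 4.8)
The plan is to dualize the proof of Lemma \ref{3.3}, interchanging the roles of flat preenvelopes into projectives with injective precovers out of injectives. The key simplification is that an injective precover already has an injective source, so the analogue of the "lift to a projective module" step used in Lemma \ref{3.3} is unnecessary here. First, since $R$ is right Noetherian, every right $R$-module admits an injective precover (see \cite{EJ}); let $\alpha\colon E\rt M$ be one, with $E_R$ injective, so that $\Hom_R(I,\alpha)\colon \Hom_R(I,E)\rt \Hom_R(I,M)$ is surjective for every injective $R$-module $I_R$. It then suffices to prove that this particular $\alpha$ is in fact an $\CE$-precover, i.e. that $\Hom_R(Y,\alpha)\colon \Hom_R(Y,E)\rt \Hom_R(Y,M)$ is surjective for every $Y\in \CE$.

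So fix $Y\in \CE$. Embedding $Y$ into an injective $R$-module yields a short exact sequence $0\rt Y\rt I\rt C\rt 0$ with $I_R$ injective, and a routine dimension shift gives $\id(C_R)<\infty$, so $C\in \CE$. Applying $\Hom_R(-,M)$ and invoking the hypothesis $\Ext^1_R(C,M)=0$ produces the exact sequence
$$0\rt \Hom_R(C,M)\rt \Hom_R(I,M)\rt \Hom_R(Y,M)\rt 0,$$
so the restriction map $\Hom_R(I,M)\rt \Hom_R(Y,M)$ is surjective. Now consider the commutative diagram
$$
\begin{CD}
\Hom_R(I,E) @>>> \Hom_R(Y,E)\\
@VVV @VVV\\
\Hom_R(I,M) @>>> \Hom_R(Y,M),
\end{CD}
$$
whose horizontal maps are restriction along $Y\hookrightarrow I$ and whose vertical maps are induced by $\alpha$. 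The left vertical map is surjective because $\alpha$ is an injective precover and $I$ is injective, and the bottom map is surjective by the displayed exact sequence; hence the right vertical map $\Hom_R(Y,\alpha)$ is surjective. Concretely: given $g\colon Y\rt M$, first extend it to $\widetilde g\colon I\rt M$, then factor $\widetilde g=\alpha\psi$ through $\alpha$, and restrict $\psi$ along $Y\hookrightarrow I$ to obtain $h\colon Y\rt E$ with $\alpha h=g$. This shows $\alpha\colon E\rt M$ is the desired $\CE$-precover with injective source.

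The only external input is the existence of injective precovers over a right Noetherian ring, which is the exact dual of the existence of flat preenvelopes over a left coherent ring used in Lemma \ref{3.3}; the remaining ingredients are the dimension shift placing $C$ in $\CE$ and the two-out-of-three diagram chase, both entirely routine. I therefore expect no real obstacle: the argument is formally dual to, and in fact slightly shorter than, the proof of Lemma \ref{3.3}, precisely because the injective precover already furnishes an injective source.
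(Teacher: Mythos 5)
Your proof is correct and matches the paper's argument essentially step for step: take an injective precover $\alpha\colon E\rt M$ (existence from the right Noetherian hypothesis via \cite{EJ}), embed a given $Y\in \CE$ into an injective with cokernel again in $\CE$, use the vanishing hypothesis to get surjectivity of $\Hom_R(I,M)\rt \Hom_R(Y,M)$, and conclude by the same commutative-square chase. Your observation that no lifting step (as in Lemma \ref{3.3}) is needed because the precover already has injective source is exactly how the paper's proof proceeds.
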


\begin{proof} Since $R$ is a right Noetherian,  by \cite[Theorem 5.4.1]{EJ}, there exists an injective-precover
$\alpha:E\rt M$. We have to show that the map $$\Hom_R(X,\alpha):\Hom_R(X,E)\rt \Hom_R(X,M)$$ is surjective for
every $X\in \CE$. Let $X\in \CE$. We can consider a short exact sequence $$0\rt X \rt E'\rt W\rt 0,$$ where $E'$
is an injective right $R$-module and $W\in \CE$. Applying the functor $\Hom_R(-,M)$ to this short exact sequence,
yields the following exact sequence $$0\rt \Hom_R(W,M)\rt \Hom_R(E',M)\rt \Hom_R(X,M)\rt 0.$$ Thus, we have the
following commutative diagram
$$\begin{CD}
 \ \Hom_R(E',E) @>>> \Hom_R(X,E) \\
@V VV @V VV \\ \Hom_R(E',M) @>>> \Hom_R(X,M).
\end{CD}$$
As the left vertical map and the bottom map are surjective, it follows that the right vertical map is also surjective.
\end{proof}

Recently, Gao and Zhao \cite[Theorem 4.11]{GZ} proved that if $_ST_R$ is tensorly faithful Wakamatsu tilting module
with $\pd(T_R)<\infty$, then every Gorenstein injective right $R$-module is in $ \mathscr{B}_T\left(R\right)$. Next, we show that
the assumption \enquote{tensorly faithfulness} is unnecessary.

\begin{lemma}\label{4.5} Let $_ST_R$ be a Wakamatsu tilting module and $M_R$ a Gorenstein injective $R$-module.
If $T_R$ has finite projective dimension or finite injective dimension, then $M_R\in \mathscr{B}_T\left(R\right)$.
\end{lemma}

\begin{proof} Let $${\bf{E_\bullet}}:\cdots \rt E_1 \rt E_0\rt E^0 \rt E^1 \rt \cdots$$ be a complete injective resolution
of $M$. So, $M\cong \Ker(E^0\rt E^1)$. It easily follows that for each $R$-module $L_R$ such that either $\pd(L_R)<\infty$
or $\id(L_R)<\infty$, the complex $\Hom_R(L,{\bf{E_\bullet}})$ is exact, and so $\Ext^i_R(L,M)=0$ for all $i\geqslant 1$.
In particular, $\Ext^i_R(T,M)=0$ for all $i\geqslant 1$.

Since the complex $\Hom_R(T,{\bf{E_\bullet}})$ is exact, the sequence
\begin{equation}
\Hom_R(T,E_1)\rt \Hom_R(T,E_0)\rt \Hom_R(T,M)\rt 0 \label{4}
\end{equation}
is exact. Applying the functor $-\otimes_ST$ to \eqref{4} gives rise to the following commutative diagram with exact rows:

\vspace{0.3cm}
\begin{tikzcd}
\Hom_R(T,E_1)\otimes_ST\arrow{r}\arrow{d}{\nu_{E_1}^T}	&\Hom_R(T,E_0)\otimes_ST\arrow{r}\arrow{d}{\nu_{E_0}^T}	
&\Hom_R(T,M_1)\otimes_ST\arrow{r}\arrow{d}{\nu_{M}^T} &0\\
E_1\arrow{r} &E_0\arrow{r} &M\arrow{r} &0
\end{tikzcd}
\vspace{0.3cm}\\
Each $\nu_{E_i}^T$ is an isomorphism, because $E_i\in \mathscr{B}_T\left(R\right)$ for $i=0,1$. Consequently, $\nu_{M}^T$
is also an isomorphism.

To complete the proof, by induction on $i$,  we show that $\Tor^S_{i}(\Hom_R(T,G),T)=0$ for every Gorenstein injective
$R$-module $G_R$
and all $i\geq 1$. Let $G_R$ be a Gorenstein injective $R$-module. We can consider a short exact sequence $$0\rt K \rt
E \rt G\rt 0,$$ where $E_R$ is an injective $R$-module and $K_R$ is a Gorenstein injective $R$-module. From the above,
we get that
\begin{equation}
0\rt \Hom_R(T,K)\rt \Hom_R(T,E) \rt\Hom_R(T,G)\rt 0 \label{5}
\end{equation}
is exact and that for every Gorenstein injective $R$-module $L_R$, the natural map $$\nu_L^T:\Hom_R(T,L)\otimes_ST\rt L$$
is an isomorphism. Hence applying the functor $-\otimes_ST$ to \eqref{5}, implies the following long exact sequence
$$\cdots \rt \Tor^S_{1}(\Hom_R(T,E),T) \rt \Tor^S_{1}(\Hom_R(T,G),T) \rt K \rt E\rt G\rt 0$$
$$\rt \Tor^S_{i}(\Hom_R(T,G),T) \rt \Tor^S_{i-1}(\Hom_R(T,K),T)\rt \Tor^S_{i-1}(\Hom_R(T,E),T)$$
$$\cdots\rt \Tor^S_{i}(\Hom_R(T,E),T)$$
But $\Tor^S_{i\geq 1}(\Hom_R(T,E),T)=0$, since $E\in \mathscr{B}_T\left(R\right)$. Thus, $\Tor^S_{1}(\Hom_R(T,G),T)=0$ and
$$\Tor^S_{i}(\Hom_R(T,G),T)\cong\Tor^S_{i-1}(\Hom_R(T,K),T)$$ for all $i\geq 2$. In particular, the claim hold for
$i=1$.

Assume that $i\geq 2$ and $\Tor^S_{i-1}(\Hom_R(T,K),T)=0$ for all Gorenstein injective $R$-modules $K_R$. Then from
the above isomorphism and the induction hypothesis, it follows that $\Tor^S_{i}(\Hom_R(T,G),T)=0$.
\end{proof}

As an immediate consequence of Proposition \ref{4.5} and Lemma \ref{2.9}, we have the following result:

\begin{corollary}\label{4.6}  Let $_ST_R$ be a tensorly faithful Wakamatsu tilting module and $M_R$ an $R$-module with
finite Gorenstein injective dimension. If $T_R$ has finite projective dimension or finite injective dimension, then
$M_R\in \mathscr{B}_T\left(R\right)$.
\end{corollary}

We are now ready to prove the main result of this section. Since the proof involves the notion of $C$-projective modules,
we need to recall their definition. Let $C_R$ be a Wakamatsu tilting module with $S = \End(C_R)$. An  $R$-module $M_R$ is
called $C$-{\it projective} if it is isomorphic to $P\otimes_SC$ for some projective right $S$-module $P_S$.

\begin{theorem}\label{4.4} Let $C_R$ be a tensorly faithful cotilting module, and let $S=\End(C_R)$. Then, for an
$R$-module $M_R$, the following are equivalent:
\begin{itemize}
\item[(i)] $M_R\in \mathscr{B}_C\left(R\right)$.
\item[(ii)] $\Gid(M_R)$ is finite.
\end{itemize}
\end{theorem}

\begin{proof} (i)$\Rightarrow$(ii) Let $$0\longrightarrow M \st{g^{-1}} \longrightarrow E^0 \st{g^0}\longrightarrow E^1
\longrightarrow \cdots \longrightarrow E^{i-1} \st{g^{i-1}}\longrightarrow E^{i}\longrightarrow \cdots$$ be an injective
resolution of $M$, $n=\id(C_R)$ and $d=\id(_SC)$. To conclude the desired result, it suffices to show that
$D:=\Coker (g^{(n+d)-1})$ is a Gorenstein injective right $R$-module.

For this purpose, it suffices to show that $\Ext^{i\geq 1}_{R}(I,D)=0$ for every injective $R$-module $I_R$ and that there
exists an exact complex $${\bf{E^\bullet}}:\cdots \rt E_n \rt E_{n-1}\rt \cdots \rt E_1 \rt E_0 \rt D \rt 0,$$ in which each
$E_i$ is an injective right $R$-module and it remains exact after applying $\Hom_R(I,-)$ for every injective $R$-module $I_R$.

For the first assertion, let $I_R$ be an injective $R$-module. For each $i\geqslant 1$, we have $$\Ext^i_{R}(I,D)\cong \Ext^{i+(d+n)+1}_{R}(I,M)\cong\Ext^{i+(d+n)+1}_{S}(\Hom_R(C,I),\Hom_R(C,M)).$$ The first isomorphism follows from dimension
shift and the second isomorphism holds by Lemma \ref{2.4}(5). Since $S$ is left Noetherian, Lemma \ref{2.6}(ii) yields
$$\fd(\Hom_R(C,I)_S)\leq \id(_SC)=d.$$ So, by applying Lemma \ref{3.4}, we conclude that $$\pd(\Hom_R(C,I)_S)\leq n+d.$$
Consequently, $\Ext^i_R(I,D)=0$ for all $i\geqslant 1$. From this, we can easily deduce that $$\Ext^{i\geq 1}_{R}(L,D)=0$$
for every $L\in \CE$, where $\CE$ is the class of all right $R$-modules with finite injective dimension.

Next, we establish the second assertion. By Lemma \ref{4.3}, there exist an injective right $R$-module $E_0$ and an
$R$-homomorphism $\rho: E_0\rt D$, which is an $\CE$-precover. As $M$ and $E^i$s are in $ \mathscr{B}_C\left(R\right)$,
by Lemma \ref{2.4}(2) so is $D$. Since $D$ belongs to $\mathscr{B}_C\left(S\right)$, by \cite[Theorem 6.1]{HW}, there
exists an exact sequence of right $R$-modules $$\cdots \rt W_1\rt W_0\rt I^0\rt I^1\rt \cdots,$$ where each $I^i$ is
injective, each $W_i$ is $C$-projective and $D\cong \Ker(I^0\rt I^1)$. Thus, there exists a projective $S$-module $P_S$
and an epimorphism $P\otimes_SC\rt D$. On the other hand, since $R$ is right Noetherian, Lemma \ref{2.6}(i) implies
$$\id_R((P\otimes_SC)_R)\leq \id(C_R)< \infty.$$ (Recall that since $C_R$ is cotilting, it has finite injective dimension
by definition.) Hence, $P\otimes_SC\in \CE$. As $P\otimes_SC\in \CE$, it follows that the map $\rho$ is epic. Now, we consider
the short exact sequence
\begin{equation}
0\rt K\rt E_0\st{\rho} \rt D\rt 0.  \label{w}
\end{equation}
Since $E_0$ and $D$ are in $\mathscr{B}_C\left(R\right)$, Lemma \ref{2.9} implies that $K\in \mathscr{B}_C\left(R\right)$.
Let us take $L\in \CE$, applying the functor $\Hom_R(L,-)$ to \eqref{w} gives rise to the following exact sequence $$0\rt
\Hom_R(L,K)\rt \Hom_R(L,E_0)\st{g}\rt\Hom_R(L,D)\rt \Ext^1_R(L,K)\rt 0,$$ and the isomorphisms
$$\Ext^{i+1}_R(L,K)\cong \Ext^i_R(L,D)$$ for all $i\geqslant 1$. As the class $\CE$ is precovering, the map $g$ must be
surjective, which consequently yields $\Ext^1_R(L,K)=0$.
Thus, $\Ext^i_R(L,K)=0$ for all $i\geqslant 1$. Hence, by the above argument, there exists an injective right $R$-module
$E_1$ and an epimorphism $E_1\rt K$ which is $\CE$-precover. We proceed in this manner to construct an exact complex
$$X=\cdots \rt E_2 \overset{d_2}\longrightarrow E_1 \overset{d_1}\longrightarrow E_0 \overset{d_0}\longrightarrow D \rt 
0,$$ where for each $i\geq 0$,
$E_i$ is an injective right $R$-module and $\Ext^1_R(L,\Ker \ d^i)=0$. This implies that $X$ remains exact after applying
$\Hom_R(I,-)$ for every injective $R$-module $I_R$.

(ii) $\Rightarrow$ (i) follows by Corollary \ref{4.6}.
\end{proof}

\begin{remark}\label{4.7} Considering Theorems \ref{3.5} and \ref{4.4}, one may wonder if we can characterize the classes $\mathscr{A}_T\left(S\right)$ and $\mathscr{B}_T\left(R\right)$ in the case $_ST_R$ is a Wakamatsu tilting module with
$\pd(T_R)$ and $\pd(_ST)$ finite. It is straightforward to observe  that if $_ST_R$ is a Wakamatsu tilting module
with $\pd(T_R)<\infty$ and $\pd(_ST)<\infty$, then $_ST_R$ is a tilting module. Furthermore, by \cite[Lemma 1.21]{MI}, for
any tilting module $_ST_R$, it is known that $\mathscr{A}_T\left(S\right)=\{N\in \text{Mod-}S\mid \Tor^S_{i\geq 1}(N,T)=0\}$
and $\mathscr{B}_T\left(R\right)=\{M\in \text{Mod-}R\mid \Ext_R^{i\geq 1}(T,M)=0\}$.
\end{remark}

\begin{example}\label{4.8} Let $\Gamma$ be a commutative Gorenstein ring and $\mathcal{Q}$ a finite acyclic quiver.
By \cite[Corollary 2.14]{BMS} the path algebra $\Gamma\mathcal{Q}$ is Iwanaga–Gorenstein. Take $T=R=\Gamma\mathcal{Q}$.
Then $S=\End_R(T)=R$ and $T$ is a tilting module. Specifically, $T$ is a tensorly faithful cotilting module. Therefore,
by Remark \ref{4.7}, we have $\mathscr{A}_T\left(S\right)=\mathscr{B}_T\left(R\right)=\text{Mod-}R$.
\end{example}

We end the paper by the following remark on the assumption \enquote{tensorly faithfulness} in Theorems \ref{3.5} and
\ref{4.4}.

\begin{remark}\label{4.9} Since every dualizing module over a Noetherian commutative ring is tensorly faithful, the
assumption of \enquote{tensorly faithfulness} is not required in the commutative ring analogues of Theorems \ref{3.5}
and \ref{4.4}. This naturally leads to the following four questions:
\begin{itemize}
\item[(i)] Under the assumptions of Theorems \ref{3.5} and \ref{4.4}, is any cotilting $R$-module $C_R$ necessarily
tensorly faithful? This is not the case. To this end, let $\Bbbk$ be a field and $R=\Bbbk\mathcal{Q}$ be the
finite-dimensional $\Bbbk$-algebra that corresponds to the quiver $$\mathcal{Q}:1\rightarrow 2 \rightarrow 3.$$ It is
easy to see that the right $R$-module $C=P(1)\oplus P(3)\oplus S(3)$ is both tilting and cotilting, and it is not
projective. As $C_R$ is a non-projective tilting $R$-module, by \cite[Theorem 2.11]{DMT}, $C_R$ couldn't be tensorly
faithful.
\item[(ii)] Are there sufficient examples of tensorly faithful cotilting modules over non-commutative, Artinian rings?
The answer is yes; in fact, there are plenty of examples of tensorly faithful cotilting modules over non-commutative
Artinian rings. For instance, if $R$ is either a left Artinian local ring or the group ring of a finite group $G$
over a commutative Artinian ring, then any cotilting module $C_R$ is tensorly faithful; see \cite[Lemma 2.14 and the
proof of Remark 2.15]{DMT}.
\item[(iii)] Are there sufficient examples of tensorly faithful cotilting modules over non-Artinian, non-commutative
Noetherian rings? The answer is again affirmative. To demonstrate this, let $\Gamma$ be a non-Artinian Gorenstein 
commutative ring. Suppose $Q$ is a finite acyclic quiver with at least two arrows. It is straightforward to 
verify that path algebra $R=\Gamma Q$ is a non-Artinian, non-commutative Noetherian $\Gamma$-algebra; see 
\cite[Definition 2.2.5]{S}. By Example \ref{4.8}, $C_R=R$ is a tensorly faithful cotilting $R$-module. 
\item[(iv)] Can the assumption \enquote{tensorly faithful} in Theorems \ref{3.5} and \ref{4.4} be relaxed? We do not
yet know the answer to this question.
\end{itemize}
\end{remark}

\section*{Acknowledgement} The authors express their sincere gratitude to the referee for their meticulous review of
the manuscript and valuable suggestions that significantly improved the paper. They also thank Tiago Cruz for his
insightful comments and feedback on an earlier version of this work.

%%%%%%%%%%%%%%%%%%%%%%%%%%%%%%%%%%%%%%%%%%%%%%%%%%%%%%%%%%%%%%%%%

\end{document}